\DeclareSymbolFontAlphabet{\mathbb}{AMSb}
\DeclareSymbolFontAlphabet{\mathbbl}{bbold}
\newtheorem{thm}{Theorem}[section]
\newtheorem{lem}[thm]{Lemma}
\newtheorem{prop}[thm]{Proposition}
\theoremstyle{definition}
\newtheorem{rem}[thm]{Remark}
\theoremstyle{remarks}
\newtheorem*{rem*}{Remarks}
\renewcommand{\mod}{\mathrm{mod}\ }
\newcommand{\sym}[1]{\mathfrak{S}_{#1}}
\newcommand{\Y}{\mathscr{Y}}
\newcommand{\C}{\mathscr{C}}
\newcommand{\NN}{\mathbb{N}}
\newcommand{\M}{{\pmb{\mathscr{M}}}}
\newcommand{\Z}{\mathbb{Z}}
\renewcommand{\P}{\mathscr{P}}
\newcommand{\cont}{\text{{\tiny $\#$}}}
\newcommand{\W}{\mathscr{W}}
\newcommand{\mbf}[1]{\boldsymbol{#1}}
\numberwithin{equation}{section}
\begin{document}
\title{A note on the signature representations of the symmetric groups}

\author{Kay Jin Lim}
\address[K. J. Lim]{Division of Mathematical Sciences, Nanyang Technological University, SPMS-PAP-03-01, 21 Nanyang Link, Singapore 637371.}
\email{limkj@ntu.edu.sg}
\thanks{The first author is supported by Singapore MOE Tier 2 AcRF MOE2015-T2-2-003.}

\author{Jialin Wang}
\address[J. Wang]{Division of Mathematical Sciences, Nanyang Technological University, SPMS-PAP-03-01, 21 Nanyang Link, Singapore 637371.}
\email{wangjl@ntu.edu.sg}

\begin{abstract} For a partition $\lambda$ and a prime $p$, we prove a necessary and sufficient condition for there exists a composition $\delta$ such that $\delta$ can be obtained from $\lambda$ after rearrangement and all the partial sums of $\delta$ are not divisible by $p$. To demonstrate why we are interested in the question, we compute some signed $p$-Kostka numbers.
\end{abstract}	

\subjclass[2010]{11P81, 20C30, 20G43}

\maketitle

\section{Preliminary}

Let $p$ be a prime. In the representation theory of the symmetric groups over a field of characteristic $p$, the Young permutation and Young modules play a central role. The Young modules are the indecomposable summands of the Young permutation modules up to isomorphism and labelled by partitions (see \cite{James}). In \cite{Donkin}, Donkin generalised these objects and obtained the signed Young modules as the indecomposable summands of the signed Young permutation modules up to isomorphism and they are labelled by certain bipartitions. In his paper, the listing modules were also obtained as a generalisation of the tilting modules for the Schur algebras and they are, up to isomorphism, the indecomposable summands of the tensor products of certain symmetric and exterior powers. The original constructions of the Young and signed Young modules used the representation theories of the Schur algebras and superalgebras respectively (as in \cite{James,Donkin}). Constructions of these objects using only the representation theory of the symmetric groups can be found in \cite{Erdmann,GLOW}. The multiplicities of the signed Young modules as direct summands of signed Young permutation modules are known as signed $p$-Kostka numbers. They generalise the classical $p$-Kostka numbers. It is an open problem to determined all signed $p$-Kostka numbers.

Suppose further that $p$ is an odd prime. Let $\lambda$ be a partition and $c_\lambda^{(p)}$ be the number of compositions $\delta$ such that $\delta$ can be obtained from $\lambda$ after rearrangement and all the partial sums of $\delta$ are not divisible by $p$. In \cite[Corollary 4.6]{Lim}, in particular, the first author showed that, in the Green ring of the symmetric groups (respectively, Schur algebras) over a field of characteristic $p$, the signature representation of a symmetric group (respectively, exterior power of certain natural module of a Schur algebra) can be written as a linear combination of the signed Young permutation modules (respectively, mixed powers) labelled by bipartitions of the form $(\lambda|p(s))$ with coefficients $c^{(p)}_\lambda$ up to signs. However, it appears to be difficult to give a closed formula for the coefficients. In Theorem \ref{T: 1} of Section \ref{S: iff}, we give a necessary and sufficient condition for $c^{(p)}_\lambda\neq 0$.

The explicit $p$-Kostka numbers are known for very few cases. For two-part partitions, we refer the reader to \cite{AHenke}. For $p=2$ and hook partitions, we refer the reader to \cite[Proposition 2.18]{O'Donovan}. In Section \ref{S: Kostka}, we calculate some signed $p$-Kostka numbers. It demonstrates why we are interested in the numbers $c^{(p)}_\lambda$.

\bigskip

We now begin by fixing the notation we need throughout.

Let $\NN_0$ be the set of non-negative integers and let $n\in\NN_0$. A composition $\delta$ of $n$ is a sequence of positive integers $(\delta_1,\ldots,\delta_s)$ such that $\sum_{i=1}^s\delta_i=n$. In this case, we write $\ell(\delta)=s$ and $n=|\lambda|$. By convention, the unique composition of 0 is denoted as $\varnothing$ and $\ell(\varnothing)=0$. Let $\eta$ be another composition. The sum $\delta+\eta$ is defined as componentwise summation and, if $m\in \NN_0$, $m\delta$ is the componentwise multiplication of $\delta$ by $m$. We also define the concatenation \[\delta\cont\eta=(\delta_1,\ldots,\delta_s,\eta_1,\ldots,\eta_t)\] if $t=\ell(\eta)$. For each $1\leq j\leq s$, we write \[\delta^+_j=\sum_{i=1}^j\delta_i\] for the partial sum and $\delta^+_s=n$. For each positive integer $d$, the number of parts of $\delta$ equal to $d$ is denoted as $n_d(\delta)$, i.e., \[n_d(\delta)=|\{i:\text{$1\leq i\leq s$ and $\delta_i=d$}\}|.\] The composition $\delta$ is called a partition if $\delta_1\geq \cdots\geq \delta_s$. Let $\P(n)$ be the set of all partitions of $n$. For two compositions $\delta$ and $\eta$, we write $\delta\cup\eta$ for the partition obtained by rearranging $\delta\cont\eta$.

Let $\lambda$ be a partition of $n$. The partition $\lambda$ can be written uniquely as a $p$-adic expansion $\lambda=\sum_{i=0}^\infty p^i\lambda(i)$ where each $\lambda(i)$ is a $p$-restricted partition, i.e., the differences between successive parts of $\lambda(i)$ (including the length of the last part) are strictly less than $p$. For example, $\lambda(0)$ is the partition obtained from $\lambda$ by removing all horizontal $p$-hooks successively.

Let $q$ be a positive integer and $\C(\lambda)$ be the set consisting of all compositions which can be rearranged to $\lambda$. A composition $\delta$ is called $q'$-cumulative if $q\nmid \delta^+_j$ for all $1\leq j\leq \ell(\delta)$. We write $c^{(q)}_\lambda$ for the number of compositions $\delta\in \C(\lambda)$ such that $\delta$ is $q'$-cumulative. Clearly, the number $c^{(q)}_\lambda$ depends only on the parts of $\lambda$ modulo $q$ and $c_\lambda^{(q)}=0$ if $q\mid n$. For each composition $\delta$ and $0\leq j\leq q-1$, let \[r_j(\delta)=\sum_{i\equiv j(\mod q)}n_i(\delta)\] and $r(\delta)=(r_1(\delta),\ldots,r_{q-1}(\delta))$ and let \[\lambda!_q=\frac{\prod_{i=0}^{q-1}r_i(\lambda)!}{\prod_{d\geq 1}n_d(\lambda)!}{\ell(\lambda)-1\choose r_0(\lambda)}\in\NN_0.\] For any $\mathbf{r}=(r_1,\ldots,r_{q-1})\in \NN_0^{q-1}$, we write \[\W^{(q)}_\mathbf{r}=\{\delta\in \C(\mu):\text{$\delta$ is $q'$-cumulative}\}\] where $\mu=((q-1)^{r_{q-1}},\ldots,1^{r_1})$.

\medskip

Let $\C^2(n)$ be the set of all pairs of compositions $(\delta|\eta)$ such that $|\delta|+|\eta|=n$. We write $\unrhd$ for the dominance order on $\P^2(n)$ the subset of $\C^2(n)$ consisting of pairs of partitions (see, for example, \cite[\S3.2]{DL}).  For each $(\lambda|p\mu)\in\P^2(n)$ and $(\alpha|\beta)\in\C^2(n)$, we have the signed Young module $Y(\lambda|p\mu)$ and signed Young permutation module $M(\alpha|\beta)$ (see \cite[\S2.3]{Donkin}). The indecomposable summands of the signed Young permutation modules are the signed Young modules. Notice that $M(\delta|\eta)\cong M(\alpha|\beta)$ if $\delta,\eta$ are rearrangements of $\alpha,\beta$ respectively and $M(\alpha|\beta\cont(1))\cong M(\alpha\cont(1)|\beta)$. We write $k_{(\alpha|\beta),(\lambda|p\mu)}$ for the signed $p$-Kostka number defined as the multiplicity of $Y(\lambda|p\mu)$ as a direct summand of $M(\alpha|\beta)$ up to isomorphism. The following property is crucial.

\begin{thm}[{\cite[2.3(8)]{Donkin}}] Let $(\alpha|\beta),(\lambda|p\mu)\in \P^2(n)$. Then $k_{(\alpha|\beta),(\lambda|p\mu)}=0$ unless $(\lambda|p\mu)\unrhd (\alpha|\beta)$ and  $k_{(\lambda|p\mu),(\lambda|p\mu)}=1$.
\end{thm}

If $\lambda\in\P(n)$ and $\alpha\in\C(n)$, the classical Young module $Y^\lambda$ and Young permutation module $M^\alpha$ satisfy $Y^\lambda\cong Y(\lambda|\varnothing)$ and $M^\alpha\cong M(\alpha|\varnothing)$. Also, the dominance order on $\P^2(n)$ restricts to the usual dominance order on $\P(n)$ under the identification $\lambda=(\lambda|\varnothing)$. In this case, we write $k_{\alpha,\lambda}=k_{(\alpha|\varnothing),(\lambda|\varnothing)}$ for the classical $p$-Kostka numbers.


\section{Necessary and sufficient condition for $c^{(p)}_\lambda\neq 0$}\label{S: iff}

The main aim in this section is to prove Theorem \ref{T: 1}. We begin with an easy proposition.

\begin{prop}\label{P: 1} Let $q$ be a positive integer and $\lambda$ be a partition. Then \[c^{(q)}_\lambda=\left |\W^{(q)}_{r(\lambda)}\right |\lambda!_q.\] In particular, we have $c^{(q)}_\lambda\neq 0$ if and only if $\W^{(q)}_{r(\lambda)}\neq \emptyset$.
\end{prop}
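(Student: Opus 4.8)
The plan is to split the parts of $\lambda$ according to their residues modulo $q$ and to reduce $q'$-cumulativity to a condition on residues. First I would record the elementary observation that for any composition $\delta$ and any $j$, the reduction $\delta^+_j$ modulo $q$ depends only on the residues of $\delta_1,\dots,\delta_j$; in particular a part divisible by $q$ leaves the running partial sum unchanged modulo $q$. From this I would extract the key characterization: a composition $\delta\in\C(\lambda)$ is $q'$-cumulative if and only if (i) its first part is not divisible by $q$, and (ii) the subsequence of those parts of $\delta$ not divisible by $q$ is itself $q'$-cumulative. Indeed, a part divisible by $q$ inserted at any position after the first changes no partial sum modulo $q$, whereas such a part in the leading position forces $q\mid\delta^+_1$.

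Next I would set up a bijection exhibiting the three factors. Given a $q'$-cumulative $\delta\in\C(\lambda)$, decompose it into the ordered subsequence $A$ of its parts not divisible by $q$, the ordered subsequence $B$ of its parts divisible by $q$, and the interleaving pattern $I$ recording which positions come from $A$ and which from $B$. By the characterization above, this assignment is a bijection onto the triples $(A,B,I)$ in which $A$ is a $q'$-cumulative composition of the parts of $\lambda$ not divisible by $q$, $B$ is an arbitrary composition of the parts of $\lambda$ divisible by $q$, and $I$ is an interleaving whose first slot is taken from $A$.

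It then remains to count each factor, writing $m=\ell(\lambda)-r_0(\lambda)$ for the number of parts not divisible by $q$. The interleavings $I$ of an $A$ of length $m$ with a $B$ of length $r_0(\lambda)$ whose first slot belongs to $A$ number $\binom{\ell(\lambda)-1}{r_0(\lambda)}$, since the first slot is forced and the remaining $\ell(\lambda)-1$ slots are filled freely. The compositions $B$ number $r_0(\lambda)!/\prod_{q\mid s}n_s(\lambda)!$. Finally, since $q'$-cumulativity of $A$ depends only on its sequence of residues, the admissible $A$ number $|\W^{(q)}_{r(\lambda)}|$ times the number of ways to lift a residue sequence to an honest composition, namely $\prod_{j=1}^{q-1}r_j(\lambda)!/\prod_{s\equiv j}n_s(\lambda)!$ (the inner product over $s\equiv j\,(\mod q)$). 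Multiplying the three counts and regrouping, the factorials $\prod_{j=1}^{q-1}r_j(\lambda)!$ and $r_0(\lambda)!$ combine into $\prod_{i=0}^{q-1}r_i(\lambda)!$, while the denominators combine into $\prod_{s\ge1}n_s(\lambda)!$, yielding exactly the stated formula.

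The last assertion then follows: both multinomial quotients $r_0(\lambda)!/\prod_{q\mid s}n_s(\lambda)!$ and $\prod_{j=1}^{q-1}r_j(\lambda)!/\prod_{s\equiv j}n_s(\lambda)!$ are positive integers, and the binomial coefficient is positive once $\lambda$ has a part not divisible by $q$, so $c^{(q)}_\lambda\neq0$ if and only if $\W^{(q)}_{r(\lambda)}\neq\emptyset$. I expect the only step needing genuine care to be the characterization in the first paragraph — precisely, verifying that interspersing the parts divisible by $q$ introduces no new vanishing partial sum beyond the forbidden leading position — since everything afterward is bookkeeping with multinomial coefficients.
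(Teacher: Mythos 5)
Your proof is correct and supplies in full the shuffle-decomposition argument that the paper leaves as ``easy'' (the paper's own proof only verifies the converse of the final assertion); the three factors you isolate --- the $q'$-cumulative residue word, the lifts of each residue class, and the interleaving of the $q$-divisible parts into non-leading positions --- are exactly the ones visible in the stated formula, so this is essentially the intended route. The only caveat, which you share with the paper, is the degenerate case where every part of $\lambda$ is divisible by $q$: there $\W^{(q)}_{r(\lambda)}=\{\varnothing\}$ is literally nonempty while $c^{(q)}_\lambda=0$, so the final equivalence needs the implicit convention that this degenerate $\W^{(q)}_{\mathbf{0}}$ is treated as empty (your appeal to ``the binomial coefficient is positive once $\lambda$ has a part not divisible by $q$'' silently assumes $\W^{(q)}_{r(\lambda)}\neq\emptyset$ forces such a part to exist).
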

\begin{proof} We only check the converse of the final assertion. If $\W^{(q)}_{r(\lambda)}\neq\emptyset$ then $r_i(\lambda)\neq 0$ for some $1\leq i\leq \ell(\lambda)$. So $\ell(\lambda)-1\geq r_0(\lambda)$ and hence ${\ell(\lambda)-1 \choose r_0(\lambda)}\neq 0$, i.e., $\lambda!_q\neq 0$.
\end{proof}

For example, $c^{(1)}_\lambda=0$ for all $\lambda$ (including when $\lambda=\varnothing$). When $q=2$, $\W^{(2)}_{r(\lambda)}\neq\emptyset$ if and only if $r(\lambda)=(1)$, i.e., $\lambda$ has exactly one part with odd size and the rest with even sizes, and, in this case, $|\W^{(2)}_{(1)}|=1$. Therefore,  \[c^{(2)}_\lambda=\left \{\begin{array}{ll}\lambda!_2&\text{if $r(\lambda)=(1)$,}\\ 0&\text{otherwise.}\end{array}\right .\] When $q=3$, $\W^{(3)}_{r(\lambda)}\neq\emptyset$ if and only if $|r_1(\lambda)-r_2(\lambda)|=1,2$. In this case, it is not difficult to see that $|\W^{(3)}_{r(\lambda)}|=1$. Therefore \[c^{(3)}_\lambda=\left \{\begin{array}{ll} \lambda!_3&\text{if $|r_1(\lambda)-r_2(\lambda)|\in\{1,2\}$,}\\ 0&\text{otherwise.}\end{array}\right .\] A closed formula for general $q$ and $\lambda$ appeared to be difficult to obtain.

To prove our Theorem \ref{T: 1}, we will need the following two lemmas. We begin with some notations.

Let $\mathbf{r}=(r_1,\ldots,r_{q-1})\in \NN_0^{q-1}$. We set \[\|\mathbf{r}\|=\sum_{i=1}^{q-1}ir_i\] which is the size of any composition belonging to the set $\W^{(q)}_{\mathbf{r}}$ so that $q\nmid \|\mathbf{r}\|$ if $\W^{(q)}_\mathbf{r}\neq\emptyset$. Also, set
\begin{align*}
  &|\mathbf{r}|_q=(q-1)+\sum_{i=2}^{q-1}(q-i)r_i,\\
  &\max \mathbf{r}=\max\{r_1,\ldots,r_{q-1}\}.
\end{align*}

\begin{lem}\label{L: 1} Let $q$ be a positive integer, $\mathbf{r}=(r_1,\ldots,r_{q-1})\in\NN_0^{q-1}$ and suppose that $r_1=\max\mathbf{r}$. Then $\W^{(q)}_{\mathbf{r}}\neq \emptyset$ if and only if
\begin{enumerate}
  \item [(i)] $q\nmid \|\mathbf{r}\|$, and
  \item [(ii)] $r_1 \leq |\mathbf{r}|_q$.
\end{enumerate}
\end{lem}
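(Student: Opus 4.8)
The plan is to translate membership in $\W^{(q)}_{\mathbf{r}}$ into a statement about lattice paths. An element of $\W^{(q)}_{\mathbf{r}}$ is exactly an ordering of the multiset of parts $\{1^{r_1},\dots,(q-1)^{r_{q-1}}\}$ whose sequence of partial sums $0=s_0<s_1<\cdots<s_N=\|\mathbf{r}\|$ (here $N=\sum_{i=1}^{q-1}r_i$) avoids every positive multiple of $q$; equivalently, it is a strictly increasing integer path from $0$ to $\|\mathbf{r}\|$, with step multiset the parts, avoiding the forbidden points $q,2q,\dots,Mq$, where $M=\lfloor\|\mathbf{r}\|/q\rfloor$. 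Before anything else I would record the arithmetic identity that rewrites (ii) in a form adapted to this picture: writing $\|\mathbf{r}\|=Mq+\rho$ with $1\le\rho\le q-1$ (which is (i)), a direct computation gives $|\mathbf{r}|_q=(q-1)+qN-\|\mathbf{r}\|-(q-1)r_1$, so that $r_1\le|\mathbf{r}|_q$ is equivalent to $qr_1\le q(N-M)+(q-1-\rho)$ and hence, since $0\le q-1-\rho<q$, to the clean inequality $r_1\le N-M$. Thus under (i) the statement to prove is $\W^{(q)}_{\mathbf{r}}\neq\emptyset\iff r_1\le N-M$.

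Necessity is then almost immediate. Condition (i) is forced because $s_N=\|\mathbf{r}\|$ is itself a partial sum, so $q\nmid\|\mathbf{r}\|$. For (ii), each forbidden point $mq$ ($1\le m\le M$) must be strictly straddled by a single step $s_{j-1}<mq<s_j$; such a straddling step has length $s_j-s_{j-1}\ge 2$, and since every step has length at most $q-1<q$ no step can straddle two forbidden points. Hence the $M$ forbidden points are crossed by $M$ distinct parts, each of value $\ge2$; in particular none of the $r_1$ parts equal to $1$ is a crossing step, so the $r_1$ ones lie among the remaining $N-M$ parts, giving $r_1\le N-M$. (Note this direction does not use $r_1=\max\mathbf{r}$.)

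For sufficiency I would build a path explicitly, using $r_1=\max\mathbf{r}$ and $r_1\le N-M\le\sum_{i\ge2}r_i$. The guiding principle is the ``capacity'' reading of $|\mathbf{r}|_q$: modulo $q$, a step of value $1$ is legal precisely when the current residue is not $q-1$, so one climbs with ones up to residue $q-1$ and is then forced to spend a part of value $i\ge2$, which resets the residue to $i-1$ and licenses a further $q-i$ ones before the next reset. Summing the initial run of $q-1$ ones and the $q-i$ ones unlocked by each non-$1$ part of value $i$ gives total room exactly $(q-1)+\sum_{i\ge2}(q-i)r_i=|\mathbf{r}|_q$ for the ones, so $r_1\le|\mathbf{r}|_q$ says the ones can be accommodated. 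Concretely I would fix an ordering of the non-$1$ parts, interleave the $r_1$ ones into the gaps as climbs (each a maximal run of $+1$ steps staying below residue $q$), and verify that every partial sum avoids $0$ while the final sum is $\|\mathbf{r}\|\equiv\rho\not\equiv0$ by (i).

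The main obstacle is exactly this construction when $r_1$ is strictly below its maximum and there are many more non-$1$ parts than ones, so that long runs of non-$1$ parts are unavoidable and the tidy ``one reset per climb'' picture breaks down. Here one must (a) choose which $M$ of the non-$1$ parts are the crossing steps and distribute the remaining internal parts and the ones among the $M+1$ blocks so that within each block the path stays inside $(mq,(m+1)q)$, and (b) order the non-$1$ parts so that no forced run of them lands on a multiple of $q$ and so that, once the ones are exhausted, the residue does not exceed $\rho$. The hypothesis $r_1=\max\mathbf{r}$ is essential precisely here: without it the ones can be too scarce to separate the large parts, and indeed for $q=5$ and $\mathbf{r}=(0,0,0,6)$ both (i) and $r_1\le N-M$ hold while $\W^{(q)}_{\mathbf{r}}=\emptyset$. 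I would discharge the bookkeeping either by a greedy rule (play a one whenever the residue is not $q-1$ and a one remains, otherwise play a largest available safe non-$1$ part, reserving enough ones for a final climb to $\rho$) together with an argument that $r_1=\max\mathbf{r}$ rules out the only stuck configuration, namely all remaining parts equal to the currently forbidden value; or by induction on $M$, the base case $M=0$ being trivial since then every partial sum lies in $(0,q)$.
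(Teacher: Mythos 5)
Your reduction of condition (ii) to the inequality $r_1\le N-M$ (where $N=\sum_i r_i$ and $M=\lfloor\|\mathbf{r}\|/q\rfloor$) is correct, and your necessity argument is complete and in fact cleaner than the paper's: each of the $M$ multiples of $q$ in $(0,\|\mathbf{r}\|)$ must be strictly straddled by some step, a straddling step has length at least $2$, and no step of length at most $q-1$ can straddle two multiples of $q$, so at least $M$ parts exceed $1$ and hence $r_1\le N-M$. The paper instead proves necessity by induction on $|\mathbf{r}|_q$, stripping a suffix off a putative element of $\W^{(q)}_{\mathbf{r}}$ to manufacture a smaller counterexample; your counting argument replaces that entirely and is a genuine improvement for that half.

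The sufficiency direction, however, is a plan rather than a proof, and the plan as stated breaks. Take $q=5$ and $\mathbf{r}=(2,0,2,0)$, i.e.\ the multiset of parts $\{1,1,3,3\}$: here $r_1=\max\mathbf{r}=2$, $\|\mathbf{r}\|=8$, $M=1$, $N=4$, so (i) and $r_1\le N-M$ both hold, and indeed $(3,3,1,1)\in\W^{(5)}_{\mathbf{r}}$. But your greedy rule is forced to play both ones first (reaching residues $1$ and $2$) and is then stuck at residue $2$ with only $3$'s remaining --- exactly the configuration ``all remaining parts equal to the currently forbidden value'' that you claim $r_1=\max\mathbf{r}$ rules out. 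That claim is therefore false for your rule, and the caveat about ``reserving enough ones for a final climb to $\rho$'' cannot rescue it here since $\rho=3>r_1$. The alternative route by induction on $M$ is only named, not carried out. This is precisely where the content of the lemma lives: the paper's converse is an induction on $|\mathbf{r}|_q$ with a case split on whether some value $b\ge 2$ with $r_b>0$ satisfies $b\not\equiv\|\mathbf{r}\|\pmod q$ (remove a part $b$, apply induction, append $(b)$) versus the case where a single value $b\equiv\|\mathbf{r}\|\pmod q$ carries all the non-unit parts (append $(b,1)$ to a smaller solution, or write down the explicit word $(1^{q-1})\cont(b,1^{q-b})\cont\cdots\cont(b,1^{s})$). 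Some such case analysis, or a correctly ordered explicit construction, is still required; as it stands the sufficiency half is a genuine gap.
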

\begin{proof} We argue by induction on $|\mathbf{r}|_q$ and by definition $|\mathbf{r}|_q \geq q-1$. If $|\mathbf{r}|_q=q-1$ then $r_i=0$ for all $2\leq i\leq q-1$. In this case, $\W^{(q)}_\mathbf{r}\neq \emptyset$ if and only if $\W^{(q)}_\mathbf{r}=\{(1^{r_1})\}$ and $0< r_1\leq q-1$, and that is if and only if $q\nmid \|\mathbf{r}\|=r_1$, and $r_1=\max\mathbf{r}\leq |\mathbf{r}|_q=q-1$. Fix a positive integer $N\geq q$. Suppose now that the equivalent statement in the lemma holds true for any $\mathbf{r}\in\NN_0^{q-1}$ such that $r_1=\max\mathbf{r}$ and $|\mathbf{r}|_q<N$. Let $\mathbf{r}=(r_1,\ldots,r_{q-1})\in\NN_0^{q-1}$, $|\mathbf{r}|_q=N$ and $r_1=\max\mathbf{r}$.

Assume that $\W^{(q)}_{\mathbf{r}}\neq \emptyset$ (so part (i) is satisfied). Suppose on the contrary that $r_1=\max\mathbf{r}>|\mathbf{r}|_q$. Let $\delta\in\W^{(q)}_\mathbf{r}$, $s=\ell(\delta)$ and $c$ be maximum such that $\delta_c=b>1$ and $\delta_i=1$ for all $c+1\leq i\leq s$. Since $\delta$ is $q'$-cumulative, we have $s-c<q-1$ (otherwise, $\delta^+_{c+d}= \delta^+_c+d\equiv 0(\mod q)$ for some $1\leq d\leq q-1\leq s-c$). Consider the following two cases.
\begin{enumerate}
  \item [(A)] Suppose that $s-c<q-b$. Let $\delta'$ be the composition such that \[\delta=\delta'\cont(b,1^{s-c}).\] Clearly, $\delta'$ is $q'$-cumulative and hence $\delta'\in\W^{(q)}_{\mathbf{r}'}$ for some $\mathbf{r}'=(r_1',\ldots,r_{q-1}')$. Notice that \[r_1'=r_1-(s-c)>|\mathbf{r}|_q-(q-b)=|\mathbf{r}'|_q > r_j'\] for any $2\leq j\leq q-1$.
  \item [(B)] Suppose that $q-b\leq s-c$. Let $\eta$ be the composition such that \[\delta=\eta\cont(b,1^{q-b})\cont (1^{(s-c)-(q-b)})\] and let $\delta'=\eta\cont (1^{(s-c)-(q-b)})$. Notice that \[\delta'^+_j=\left \{\begin{array}{ll}\delta^+_{j-(q-b+1)}-q&\text{if $j>\ell(\eta)$,}\\ \delta^+_j&\text{otherwise.}\end{array}\right.\] So $\delta'$ is $q'$-cumulative and hence $\delta'\in \W^{(q)}_{\mathbf{r}'}$ for some $\mathbf{r}'=(r_1',\ldots,r_{q-1}')$. Also, \[r'_1=r_1-(q-b)>|\mathbf{r}|_q-(q-b)=|\mathbf{r}'|_q > r_j'\] for any $2\leq j\leq q-1$.
\end{enumerate} In both cases, we have $q\nmid \|\mathbf{r}'\|$ and $r_1'=\max\mathbf{r}'>|\mathbf{r}'|_q<|\mathbf{r}|_q=N$ but yet $\W^{(q)}_{\mathbf{r}'}\neq\emptyset$. This contradicts to our induction hypothesis.

Conversely, assume that both parts (i) and (ii) in the statement hold. In particular, $q>1$. Consider the following two cases.
\begin{enumerate}
  \item [(A)] Suppose there exists $2\leq b\leq q-1$ such that $r_b>0$ and $b\not\equiv \|\mathbf{r}\|(\mod q)$. Let $\mathbf{r}'=(r_1',\ldots,r_{q-1}')$ where $r_j'=r_j$ if $j\neq b$ and $r_b'=r_b-1$. Then $r_1'=\max\mathbf{r}'$, $|\mathbf{r'}|_q=|\mathbf{r}_q|-(q-b)<N$ and \[\|\mathbf{r}'\|=\|\mathbf{r}\|-b\not\equiv 0(\mod q).\] By induction hypothesis, there exists $\delta'\in\W^{(q)}_{\mathbf{r}'}$. It is easy to check that $\delta'\cont(b)\in\W^{(q)}_{\mathbf{r}}$.
  \item [(B)] Suppose that, for any $2\leq b\leq q-1$ with $r_b>0$, we have $b\equiv \|\mathbf{r}\|(\mod q)$. Since $|\mathbf{r}|_q\geq q$, there exists a unique $2\leq b\leq q-1$ such that $r_b>0$. By assumption, $b\equiv \|\mathbf{r}\|(\mod q)$. We further consider two cases.
      \begin{enumerate}
        \item [(a)] Suppose that $r_1\leq |\mathbf{r}|_q-(q-b)$. Let $\mathbf{r}'=(r_1',\ldots,r_{q-1}')$ such that $r_i'=0$ for all $i\neq 1,b$ and $r_i'=r_i-1$ if $i=1,b$. Then
      \begin{align*}
            &\|\mathbf{r}'\|=r_1'+br_b'=\|\mathbf{r}\|-1-b\equiv -1(\mod q),\\
            &\max\mathbf{r}'=r_1'\leq |\mathbf{r}|_q-(q-b)-1< |\mathbf{r}'|_q<N.
      \end{align*} Since $q>2$, we have $-1\not\equiv 0(\mod q)$ and hence $q\nmid \|\mathbf{r}'\|$.  By induction hypothesis, there exists $\delta'\in\W^{(q)}_{\mathbf{r}'}$. Define $\delta=\delta'\cont(b,1)$. Notice that $\delta^+_{\ell(\delta')+1}\equiv b-1(\mod q)$ and $\delta^+_{\ell(\delta')+2}\equiv b(\mod q)$.
        \item [(b)] Suppose that $|\mathbf{r}|_q-(q-b)<r_1$. Let $s=r_1-(q-b)(r_b-1)-(q-1)$. By assumption, $0<s\leq q-b$. Let $\eta=(b,1^{q-b})$ and \[\delta=(1^{q-1})\cont\underbrace{\eta\cont\cdots\cont\eta}_{\text{$(r_b-1)$ times}}\cont (b,1^s).\]
      \end{enumerate} In both cases, it is easy to check that $\delta$ is $q'$-cumulative and hence $\delta\in\W^{(q)}_\mathbf{r}$.
\end{enumerate}
\end{proof}

To state the next lemma, we introduce a notation. Suppose that $a$ is multiplicatively invertible in $\Z/q\Z$ and $\mathbf{r}=(r_1,\ldots,r_{q-1})$. We write \[{}^a\mathbf{r}=(r_1',\ldots,r_{q-1}')\] where $r_j'=r_i$ if $j\equiv ai(\mod q)$ for any $1\leq j\leq q-1$. So ${}^a\mathbf{r}$ is obtained from $\mathbf{r}$ by a permutation determined by $a$.

\begin{lem}\label{L: 2} Let $q$ be a positive integer, $a$ be multiplicatively invertible in $\Z/q\Z$ and $\mathbf{r}=(r_1,\ldots,r_{q-1})\in\NN_0^{q-1}$. Then $|\W^{(q)}_{\mathbf{r}}|=|\W^{(q)}_{{}^a\mathbf{r}}|$.
\end{lem}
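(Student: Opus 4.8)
The plan is to exhibit an explicit bijection between the two sets, built from multiplication by $a$ on the parts. Write $\bar{x}$ for the unique element of $\{1,\ldots,q-1\}$ congruent to $x$ modulo $q$; this is defined precisely when $x\not\equiv 0\pmod q$. Given $\delta=(\delta_1,\ldots,\delta_s)\in\W^{(q)}_{\mathbf{r}}$, every part $\delta_i$ lies in $\{1,\ldots,q-1\}$ and is therefore nonzero modulo $q$, and since $a$ is a unit we have $a\delta_i\not\equiv 0\pmod q$. Hence I may define
\[\phi_a(\delta)=(\overline{a\delta_1},\ldots,\overline{a\delta_s}),\]
and I claim that $\phi_a$ maps $\W^{(q)}_{\mathbf{r}}$ bijectively onto $\W^{(q)}_{{}^a\mathbf{r}}$.

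First I would verify that $\phi_a(\delta)$ has the correct multiset of parts. For a fixed $j\in\{1,\ldots,q-1\}$, the number of parts of $\phi_a(\delta)$ equal to $j$ is the number of indices $i$ with $a\delta_i\equiv j\pmod q$, equivalently with $\delta_i=\overline{a^{-1}j}$. As $\delta$ has exactly $r_{\overline{a^{-1}j}}$ such parts, and this quantity is by construction the $j$-th entry of ${}^a\mathbf{r}$ (since $j\equiv a\,\overline{a^{-1}j}$), it follows that $\phi_a(\delta)$ is a rearrangement of the partition attached to ${}^a\mathbf{r}$.

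The key step is that $\phi_a$ preserves the $q'$-cumulative property. Writing $\sigma_j$ for the $j$-th partial sum of $\phi_a(\delta)$ and reducing modulo $q$, the relation $\overline{a\delta_i}\equiv a\delta_i$ gives $\sigma_j\equiv a\sum_{i=1}^j\delta_i=a\,\delta^+_j\pmod q$. Because $a$ is invertible in $\Z/q\Z$, we have $\sigma_j\equiv 0$ if and only if $\delta^+_j\equiv 0\pmod q$; since $\delta$ is $q'$-cumulative, every $\delta^+_j$ is nonzero modulo $q$, and therefore so is every $\sigma_j$. Thus $\phi_a(\delta)\in\W^{(q)}_{{}^a\mathbf{r}}$.

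Finally, bijectivity follows by producing the inverse. Let $b$ be an inverse of $a$ in $\Z/q\Z$. The identical construction yields a map $\phi_b\colon\W^{(q)}_{{}^a\mathbf{r}}\to\W^{(q)}_{\mathbf{r}}$, and on each part $\overline{b\,\overline{a\delta_i}}=\overline{ba\delta_i}=\overline{\delta_i}=\delta_i$, so $\phi_b\circ\phi_a$ and $\phi_a\circ\phi_b$ are the respective identity maps. Hence $\phi_a$ is a bijection and $|\W^{(q)}_{\mathbf{r}}|=|\W^{(q)}_{{}^a\mathbf{r}}|$. I do not anticipate a genuine obstacle here, as the whole argument rests on the single observation that multiplication by the unit $a$ permutes $\Z/q\Z$ while fixing $0$: this simultaneously permutes the allowed part sizes $\{1,\ldots,q-1\}$ and preserves whether or not each partial sum vanishes modulo $q$. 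The only points needing care are the well-definedness of $\bar{\cdot}$ (ensured because $a$ and each part are nonzero modulo $q$) and matching the permuted multiplicities precisely with the stated definition of ${}^a\mathbf{r}$.
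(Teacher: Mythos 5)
Your proof is correct and essentially identical to the paper's: both define the bijection by replacing each part $\delta_t$ with the representative of $a\delta_t$ in $\{1,\ldots,q-1\}$, check that the multiplicities match ${}^a\mathbf{r}$ and that partial sums transform by multiplication by the unit $a$, and invert via multiplication by $a^{-1}$. No issues.
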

\begin{proof} Define $\phi:\W^{(q)}_\mathbf{r}\to\W^{(q)}_{{}^a\mathbf{r}}$ as, for any $\delta=(\delta_1,\ldots,\delta_s)\in \W^{(q)}_\mathbf{r}$, $\phi(\delta)=\delta'=(\delta'_1,\ldots,\delta'_s)$ where $1\leq \delta'_t\leq q-1$ and $\delta'_t\equiv a\delta_t(\mod q)$ for all $1\leq t\leq s$. Notice that, for each $1\leq j\leq q-1$, the number of parts of $\delta'$ with size $j$ is precisely the number of parts of $\delta$ with size $i$ where $j\equiv  ai(\mod q)$ and $\delta'^+_t\equiv a\delta^+_t(\mod q)$. So $\phi$ is well-defined. Since $a$ is invertible in $\Z/q\Z$, $\phi$ is invertible and hence we have $|\W^{(q)}_{\mathbf{r}}|=|\W^{(q)}_{{}^a\mathbf{r}}|$.
\end{proof}

We are now ready to state and prove our main theorem.

\begin{thm}\label{T: 1} Let $p$ be a prime number, $\lambda$ be a partition and $r(\lambda)=(r_1,\ldots,r_{p-1})$. Then $c^{(p)}_\lambda\neq 0$ if and only if
\begin{enumerate}
  \item [(i)] $p\nmid |\lambda|$, and
  \item [(ii)] for some $r_a=\max r(\lambda)$, $r_a \leq |{}^br(\lambda)|_p$ where $ab\equiv 1(\mod p)$.
\end{enumerate}
\end{thm}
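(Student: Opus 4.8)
The plan is to reduce the statement to Lemmas \ref{L: 1} and \ref{L: 2} by way of Proposition \ref{P: 1}. Writing $\mathbf{r}=r(\lambda)$, Proposition \ref{P: 1} tells us that $c^{(p)}_\lambda\neq 0$ if and only if $\W^{(p)}_{\mathbf{r}}\neq\emptyset$, so it suffices to decide nonemptiness of $\W^{(p)}_{\mathbf{r}}$. The obstacle is that Lemma \ref{L: 1} applies only when the \emph{first} coordinate realises $\max\mathbf{r}$, whereas the maximum of $r(\lambda)$ may sit at an arbitrary index $a$. Here is where primality enters: every $a\in\{1,\ldots,p-1\}$ is invertible modulo $p$, and Lemma \ref{L: 2} then lets us rotate the maximal coordinate into the first slot at no cost to $|\W^{(p)}_{\mathbf{r}}|$.

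First I would record the congruence $\|\mathbf{r}\|\equiv|\lambda|\pmod p$. Expanding $\|\mathbf{r}\|=\sum_{i=1}^{p-1}i\,r_i(\lambda)=\sum_{i=1}^{p-1}i\sum_{k\equiv i}n_k(\lambda)$, replacing each $i$ by $k$ modulo $p$, and observing that the parts divisible by $p$ contribute $0\pmod p$ to $|\lambda|$, gives the claim. This identifies condition (i) of the theorem, $p\nmid|\lambda|$, with $p\nmid\|\mathbf{r}\|$, which is condition (i) of Lemma \ref{L: 1}.

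Next, fixing an index $a$ with $r_a=\max\mathbf{r}$ and setting $b\equiv a^{-1}\pmod p$, I would verify two facts about ${}^b\mathbf{r}$. From the defining formula, $({}^b\mathbf{r})_1=r_a=\max\mathbf{r}=\max\,{}^b\mathbf{r}$, since ${}^b\mathbf{r}$ merely permutes the entries of $\mathbf{r}$; thus ${}^b\mathbf{r}$ meets the hypothesis of Lemma \ref{L: 1}. Moreover the substitution $i\mapsto bi$ yields $\|{}^b\mathbf{r}\|\equiv b\|\mathbf{r}\|\equiv b|\lambda|\pmod p$, so, $b$ being invertible, condition (i) of Lemma \ref{L: 1} for ${}^b\mathbf{r}$ is again equivalent to $p\nmid|\lambda|$. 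Lemma \ref{L: 1} then reads: $\W^{(p)}_{{}^b\mathbf{r}}\neq\emptyset$ if and only if $p\nmid|\lambda|$ and $\max\mathbf{r}\leq|{}^b\mathbf{r}|_p$, the latter being precisely condition (ii) of the theorem for this $a$.

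Finally I would assemble these. If $p\mid|\lambda|$ then $p\mid\|\mathbf{r}\|$, so $\W^{(p)}_{\mathbf{r}}=\emptyset$ by the observation preceding Lemma \ref{L: 1}, and both sides of the asserted equivalence are false. So assume $p\nmid|\lambda|$; then $\mathbf{r}\neq\mathbf{0}$ and $\max\mathbf{r}>0$, which guarantees an index $a\in\{1,\ldots,p-1\}$ with $r_a=\max\mathbf{r}$. Applying Lemma \ref{L: 2} to this $a$ gives $\W^{(p)}_{\mathbf{r}}\neq\emptyset$ if and only if $\W^{(p)}_{{}^b\mathbf{r}}\neq\emptyset$, and combining with the previous paragraph shows that, under (i), we have $c^{(p)}_\lambda\neq 0$ if and only if $\max\mathbf{r}\leq|{}^b\mathbf{r}|_p$, i.e. (ii) holds for this $a$. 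The single point needing care is the bookkeeping of the inverse: it is $b=a^{-1}$, not $a$, that carries the maximum at index $a$ to index $1$, which is exactly why condition (ii) is phrased through ${}^b r(\lambda)$; I expect this indexing to be the likeliest source of a slip.
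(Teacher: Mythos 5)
Your proposal is correct and follows essentially the same route as the paper: reduce to nonemptiness of $\W^{(p)}_{r(\lambda)}$ via Proposition \ref{P: 1}, use Lemma \ref{L: 2} with $b\equiv a^{-1}\pmod p$ to move the maximal coordinate into the first slot, apply Lemma \ref{L: 1}, and identify $p\nmid\|{}^br(\lambda)\|$ with $p\nmid|\lambda|$ by the congruence $\|{}^br(\lambda)\|\equiv b|\lambda|\pmod p$. The extra care you take with the case $p\mid|\lambda|$ and with the existence of an index $a$ attaining the maximum is harmless and matches the paper's argument in substance.
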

\begin{proof} By Proposition \ref{P: 1} and Lemma \ref{L: 2}, we have $c^{(p)}_\lambda\neq 0$ if and only if $\W^{(p)}_{r(\lambda)}\neq \emptyset$ if and only if $\W^{(p)}_{{}^br(\lambda)}\neq \emptyset$ for some $1\leq b\leq p-1$. Let $1\leq a,b\leq p-1$ be such that $r_a=\max r(\lambda)$, $ab\equiv 1(\mod p)$ and ${}^br(\lambda)=\mathbf{r}'=(r_1',\ldots,r_{p-1}')$. Notice that $r_1'=r_a=\max r(\lambda)=\max \mathbf{r}'$. By Lemma \ref{L: 1}, $\W^{(p)}_{{}^br(\lambda)}=\W^{(p)}_{\mathbf{r}'}\neq \emptyset$ if and only if
\begin{enumerate}
  \item [(i)] $p\nmid \|\mathbf{r}'\|$, and
  \item [(ii)] $r'_1 \leq |\mathbf{r}'|_p$.
\end{enumerate} Notice that \begin{align*}
\|\mathbf{r}'\|=\sum_{j=1}^{p-1}jr_j'\equiv\sum^{p-1}_{i=1}ibr_i=b\sum^{p-1}_{i=1}ir_i&=b\sum_{i=0}^{p-1}i\sum_{j\equiv i(\mod p)}n_j(\lambda)\\
&\equiv b\sum_{i=0}^{p-1}\sum_{j\equiv i(\mod p)}jn_j(\lambda)=b|\lambda|(\mod p).
\end{align*} Therefore, $p\nmid \|\mathbf{r}'\|$ is equivalent to $p\nmid |\lambda|$. The proof is now complete.
\end{proof}

We end this section with the following remark.

\begin{rem} Keep the notation as in Theorem \ref{T: 1} and suppose that there is another $1\leq c\leq p-1$ such that $c\neq a$ and $r_a=r_c=\max r(\lambda)$. Then \[|{}^br(\lambda)|_p=(p-1)+\sum^{p-1}_{i=2}(p-i)r_i'\geq (p-j)r_j'=(p-j)r_c\geq r_a\] where $j\equiv bc(\mod p)$. If $p\nmid |\lambda|$, by Theorem \ref{T: 1}, $c^{(p)}_\lambda\neq 0$. In other words, as long as $p\nmid |\lambda|$ and $r(\lambda)$ attains its maximum at least 2 distinct places, we have $c^{(p)}_\lambda\neq 0$.
\end{rem}

\section{Some explicit computation of signed $p$-Kostka numbers}\label{S: Kostka}

Fix an odd prime $p$. In this section, we compute some explicit signed $p$-Kostka numbers and decompose the Young permutation module $M^{(n-2,1,1)}$. The proofs of the statements in this section require notions and notations which have not been discussed earlier in this paper and are not required elsewhere. As such, we only refer the reader to the necessary backgrounds in the proofs. Throughout, we use the convention that if 0 appears in a component of a composition (for some reasons) we simply delete that component. For examples, $(0)=\varnothing$ and $(0,1^r)=(1^r)$.

We begin with the following two lemmas.


\begin{lem}\label{L: simplify} Let $m,k\in\NN_0$  suppose that $k=bp+r$ where $0\leq r\leq p-1$.
\begin{enumerate}
  \item [(i)] In the Green ring, \[[M((m)|(k))]=\sum_{j=0}^b\sum_{\xi\in\P(k-jp)}(-1)^{|\xi|-\ell(\xi)} c_\xi^{(p)}[M((m)\cup\xi|p(j))],\] here, $[-]$ denotes the isomorphism class of the respective module.
  \item [(ii)] Let $n=m+k$. We have the isomorphism
  \begin{align*}
  &M((m)|(k))\\
  \cong &\left\{\begin{array}{ll} Y((m,1^r)|p(b))&\text{if either $p\mid n$ or $k\in\{0,n\}$,}\\ Y((m,1^r)|p(b))\oplus Y((m+1,1^{r-1})|p(b))&\text{if $p\nmid n$ and $r\geq 1$,}\\ Y((m)|p(b))\oplus Y((m+1,1^{p-1})|p(b-1))&\text{if $p\nmid n$, $k\neq 0$ and $r= 0$.}\end{array}\right .
  \end{align*}
\end{enumerate}
\end{lem}
\begin{proof} Let $\alpha=(m)$ and $\beta=(k)$. For part (i), the set $V((k);(\xi,p\mu))$ in \cite[Notation 4.1(ix)]{Lim} is empty unless $p\mu=p(j)=(k-|\xi|)$ for some $\xi\in\P(k-jp)$ and $0\leq j\leq b$. In this case, \[V((k);(\xi,p(j)))=\{(\xi)\}\] and hence $c_{(k);(\xi,p(j))}^{(p)}=(-1)^{|\xi|-\ell(\xi)}c_\xi^{(p)}$. The result now follows from \cite[Corollary 4.6(i)]{Lim}.

For part (ii), the cases $p\mid n$ and $k=n$ have been obtained in \cite[Proposition 7.1]{GLOW} and the case $k=0$ is trivial. Now assume $p\nmid n$ and $k\neq 0$. By the Littlewood-Richardson rule and Nakayama conjecture (\cite{Brauer,Robinson}), we have $M((m)|(k))\cong S^{(m,1^{k})}\oplus S^{(m+1,1^{k-1})}$. Since we assume $p$ is odd, by \cite{Peel}, any Specht module $S^{(n-\ell,1^\ell)}$ labelled by the hook $(n-\ell,1^\ell)$ is irreducible. By \cite[Theorem 5.1]{DL}, $S^{(n-\ell,1^\ell)}$ is isomorphic to the signed Young module $Y((n-\ell,1^s)|p(c))$ where $\ell=cp+s$ and $0\leq s\leq p-1$.
\end{proof}

A direct application of the version of signed Klyachko's formula introduced in \cite{GL} yields the following lemma. However, for completeness, we give a proof for this easy case.

\begin{lem}\label{L: Kostka reduction} Let $\lambda$ be a partition of $n$ such that $\lambda=\lambda(0)+p(a)$ for some $a\in \NN_0$, let $b,j\in\NN_0$, let $\alpha$ be a composition of $n-jp$ such that $0\leq \alpha_i\leq p-1$ for all $i\geq 2$ and let $\alpha=(\alpha_1)\cont\overline{\alpha}$. Then \[k_{(\alpha|p(b+j)),(\lambda|p(b))}=\left \{\begin{array}{ll} k_{((\alpha_1-ap)\cont\overline{\alpha}|p(j)),(\lambda(0)|\varnothing)}&\text{if $\alpha_1\geq ap$,}\\ 0&\text{otherwise.}\end{array}\right .\] Here, we use the convention $k_{(\varnothing|\varnothing),(\varnothing|\varnothing)}=1$.
\end{lem}
\begin{proof} The proof uses various results in \cite{GLOW}. Suppose first that $\beta=p(b+j)$, $p\mu=p(b)$ and, $a=\sum_{i\geq 0} a_ip^i$ and $b=\sum_{i\geq 0} b_ip^i$ are the $p$-adic sums of $a$ and $b$ respectively. Let $n_0=|\lambda(0)|=n-ap$ and $n_i=a_{i-1}+b_{i-1}$ for $i\geq 1$. Then $\rho=(1^{n_0},p^{n_1},\ldots)$ in \cite[Corollary 5.2]{GLOW} and the set $\Lambda:=\Lambda((\alpha|\beta)|\rho)$ (see \cite[Notation 3.8]{GLOW}) consists of all pairs of tuples of compositions \[(\mbf{\gamma}|\mbf{\delta})=((c_0)\cont\overline{\alpha},(c_1),\ldots|(d_0),(d_1),\ldots)\] such that $\alpha_1=\sum_{i\geq 0} c_ip^i$, $(b+j)p=\sum_{i\geq 0} d_ip^i$, $c_0+|\overline{\alpha}|+d_0=n-ap$ and $c_i+d_i=n_i$ for all $i\geq 1$. By \cite[Corollary 5.2]{GLOW}, we have \begin{align*}
&k_{(\alpha|p(b+j)),(\lambda|p(b))}\\
=&\sum_{(\mbf{\gamma}|\mbf{\delta})\in\Lambda} [W_1((c_0)\cont\overline{\alpha}|(d_0)):Y(\lambda(0)|\varnothing)]\cdot \prod_{i\geq 1} [W_{p^i}((c_i)|(d_i)):Q_{p^i}((a_{i-1})|(b_{i-1}))],
\end{align*} here, $[W:Q]$ denotes the multiplicity of an indecomposable module $Q$ as a direct summand of a module $W$ up to isomorphism (Krull-Schmidt Theorem applies here). We claim that, for $i\geq 1$, $[W_{p^i}((c_i)|(d_i)):Q_{p^i}((a_{i-1})|(b_{i-1}))]$ is $1$ if $c_i=a_{i-1}$ (and hence $d_i=b_{i-1}$) and $0$ otherwise. Once we have proved this, we have $c_0=\alpha_1-ap$ and hence $d_0=jp$. Notice that this is a contradiction unless $\alpha_1-ap\geq 0$. In this case, the module $W_1((\alpha_1-ap)\cont\overline{\alpha}|p(j))$ is $M((\alpha_1-ap)\cont\overline{\alpha}|p(j))$ as defined in \cite[Definition 3.6]{GLOW}. In the other case when $\alpha_1<ap$, we deduce that $\Lambda=\emptyset$ and therefore $k_{(\alpha|p(b+j)),(\lambda|p(b))}=0$. This proves our desired result. We should now prove the claim.

Let $P_{p^i}$ be a Sylow $p$-subgroup of $\sym{p^i}$, $N_{p^i}$ be the normaliser of $P_{p^i}$ in $\sym{p^i}$ and $\mathfrak{A}_{p^i}$ be the alternating subgroup of $\sym{p^i}$. Since $i\geq 1$ and $p$ is odd, the normaliser $N_{\mathfrak{A}_{p^i}}(P_{p^i})$ acts trivially on both $W_{p^i}((c_i)|(d_i))$ and $Q_{p^i}((a_{i-1})|(b_{i-1}))$. Therefore, using \cite[Lemma 2.1]{GLOW}, we have \begin{align}\label{Eq: 3}
[W_{p^i}((c_i)|(d_i)):Q_{p^i}((a_{i-1})|(b_{i-1}))]=[\overline{W}_{p^i}((c_i)|(d_i)):\overline{Q}((a_{i-1})|(b_{i-1}))]
\end{align} (see \cite[Definitions 3.6, 4.8 and Lemma 4.9]{GLOW}. By \cite[Proposition 4.5]{GLOW}, Equation \ref{Eq: 3} is equal to zero unless $a_{i-1}=c_i$ and $b_{i-1}=d_i$. In this case, $W_{p^i}((a_{i-1})|(b_{i-1}))\cong Q_{p^i}((a_{i-1})|(b_{i-1}))$ and therefore Equation \ref{Eq: 3} is equal 1.
\end{proof}

Our first result describes the decomposition of the Young permutation module labelled by the `first' 3-part partition $(n-2,1,1)$.

\begin{prop} Let $n\geq 3$ and $r$ be the remainder of $n$ modulo $p$. Then \[M^{(n-2,1,1)}\cong \left \{\begin{array}{ll} Y^{(n-1,1)}\oplus Y^{(n-2,2)}\oplus Y^{(n-2,1^2)}&\text{if $r=0$,}\\ 2\cdot Y^{(n-1,1)}\oplus Y^{(n-2,2)}\oplus Y^{(n-2,1^2)}&\text{if $r=1$,}\\Y^{(n)}\oplus Y^{(n-1,1)}\oplus Y^{(n-2,2)}\oplus Y^{(n-2,1^2)}&\text{if $r=2$,}\\ Y^{(n)}\oplus 2\cdot Y^{(n-1,1)}\oplus Y^{(n-2,2)}\oplus Y^{(n-2,1^2)}&\text{if $r\geq 3$.}\end{array}\right .\]
\end{prop}
\begin{proof} Let $k=2$ and $m=n-2$ in Lemma \ref{L: simplify}. We have
\begin{align*} -[M((n-2,2)|\varnothing)]+[M((n-2,1,1)|\varnothing)]&=[M((n-2)|(2))]\\
&=\left \{\begin{array}{ll} [Y^{(n-2,1^2)}]&\text{if $p\mid n$,}\\ {[Y^{(n-1,1)}]}+{[Y^{(n-2,1^2)}]}&\text{if $p\nmid n$.}\end{array}\right .
\end{align*} Since $M((n-2,2)|\varnothing)=M^{(n-2,2)}$ and $M((n-2,1,1)|\varnothing)=M^{(n-2,1,1)}$, using \cite[Corollary 3.5]{AHenke} for the module $M^{(n-2,2)}$, we obtain our desired result.
\end{proof}

Our second result offers some explicit signed $p$-Kostka numbers.

\begin{prop}\label{P: signed Kostka} Let $m,b,r\in\NN_0$ such that $1\leq r\leq p-1$. and $n=m+bp+r$.
\begin{enumerate}
  \item [(i)] If $b\geq 1$ then $k_{((m,1)|p(b)),((m)\cup(2,1^{p-1})|p(b-1))}=1$.
  \item [(ii)] If $r\geq 2$ then $k_{((m,1^r)|p(b)),((m)\cup(2,1^{r-2})|p(b))}=r-1$.
  \item [(iii)] We have \[k_{((m,1^r)|p(b)),((m+1,1^{r-1})|p(b))}=\left \{\begin{array}{ll} r-1&\text{if $p\mid n$,}\\  r&\text{if $p\nmid n$ and $p\nmid m$,}\\  1&\text{if $p\nmid n$ and $p\mid m$.} \end{array}\right .\]
\end{enumerate}
\end{prop}
\begin{proof} Let $k=bp+r$ so that $n=m+k$. By Lemma \ref{L: simplify}, we have
\begin{align}\label{Eq: 4}
  &\sum_{j=0}^b\sum_{\xi\in\P(k-jp)}(-1)^{|\xi|-\ell(\xi)} c_\xi^{(p)}[M((m)\cup\xi|p(j))]\notag\\
  =&\left \{\begin{array}{ll} [Y((m,1^r)|p(b))]&\text{if $p\mid n$,}\\ {[Y((m,1^r)|p(b))]}+{[Y((m+1,1^{r-1})|p(b))]}&\text{if $p\nmid n$.}\end{array}\right .
\end{align} We split into 2 cases which Case (A): $p\mid n$ and Case (B): $p\nmid n$. Calculations for case (B) is almost identical with case (A) and will be left to the reader.

Case (A): Suppose first that $r\geq 2$. Let $\zeta=(2,1^{r-2})$. The element $[M((m)\cup \zeta|p(b))]$ appears in the summation of Equation \ref{Eq: 4} with coefficient $(-1)^{r-(r-1)}c_\zeta^{(p)}=-(r-1)$. Notice that if $((m)\cup \zeta|p(b))\unrhd ((m)\cup\xi|p(j))$ then $j=b$ and $\xi\in\{(1^r),\zeta\}$. Since the term $[Y((m)\cup\zeta|p(b))]$ does not appear on the other side, the $-(r-1)$ copies of $[Y((m)\cup\zeta|p(b))]$ contributed by $-(r-1)[M((m)\cup\zeta|p(b))]$ must be cancelled out solely by $[M((m,1^r)|p(b))]$, i.e., \[k_{((m,1^r)|p(b)),((m)\cup(2,1^{r-2})|p(b))}=r-1.\] This proves part (ii) when $p\mid n$. 

Suppose now that $r=1$ and $b\geq 1$. The calculation is similar to the earlier case by taking $\zeta=(2,1^{p-1})$. It turns out that $(-1)^{p+1-p}c_\zeta^{(p)}=-1$ and $c^{(p)}_{(1^{p+1})}=0$. Therefore, \[k_{((m,1)|p(b)),((m)\cup(2,1^{p-1})|p(b-1))}=1.\] This proves part (i) when $p\mid n$.

For part (iii), suppose first that $r\geq 2$. Notice that, if $((m+1,1^{r-1})|p(b))\unrhd ((m)\cup\xi|p(j))$ then $j=b$ and $\xi\in\{(1^r),(2,1^{r-2})\}$. Since $[Y((m+1,1^{r-1})|p(b))]$ does not appear in Equation \ref{Eq: 4}, the contribution of $[Y((m+1,1^{r-1})|p(b))]$ by $-(r-1)[M((m,2,1^{r-2})|p(b))]$ and $[M((m,1^r)|p(b))]$ must be zero and hence \[k_{((m,1^r)|p(b)),((m+1,1^{r-1})|p(b))}=(r-1)k_{((m,2,1^{r-2})|p(b)),((m+1,1^{r-1})|p(b))}.\] Let $m=cp+s$ where $0\leq s\leq p-1$. Since $p\mid n$ and $r\geq 2$, we have $1\leq s\leq p-2$. By Lemma \ref{L: Kostka reduction}, \cite[Corollary 1.1]{BowGia} and \cite[Corollary 3.5]{AHenke}, we obtain
\begin{align*}
   k_{((m,2,1^{r-2})|p(b)),((m+1,1^{r-1})|p(b))}=&k_{((s,2,1^{r-2})|\varnothing),(s+1,1^{r-1})|\varnothing)}\\
   =&k_{(s,2,1^{r-2}),(s+1,1^{r-1})}\\
   =&k_{(s,2),(s+1,1)}k_{(1^{r-2}),(1^{r-2})}=1.
\end{align*} Therefore $k_{((m,1^r)|p(b)),((m+1,1^{r-1})|p(b))}=(r-1)\cdot 1=r-1$. Suppose now that $r=1$. By Lemma \ref{L: simplify}, since $(m+1)=\varnothing+p(c+1)$ and $m<p(c+1)$, we have \[k_{((m,1)|p(b)),((m+1)|p(b))}=0=r-1.\] This proves part (iii) when $p\mid n$.

Case (B): The calculations are similar for parts (i) and (ii). For the proof of part (iii), suppose first that $r=1$. If $m=cp+s$ then $0\leq s\leq p-2$. By Lemma \ref{L: simplify}, \[k_{((m,1)|p(b)),((m+1)|p(b))}=k_{((s,1)|\varnothing),((s+1)|\varnothing)}=k_{(s,1),(s+1)}=1\] where the final equation is obtained using \cite[Corollary 13.14]{GJ} in the semisimple case. Suppose now that $r\geq 2$, compared with case (A), instead, we have \[k_{((m,1^r)|p(b)),((m+1,1^{r-1})|p(b))}=1+(r-1)k_{((m,2,1^{r-2})|p(b)),((m+1,1^{r-1})|p(b))}.\] Let $m=cp+s$ where $0\leq s\leq p-1$. Similarly, we get
\begin{align*}
   k_{((m,2,1^{r-2})|p(b)),((m+1,1^{r-1})|p(b))}=&k_{((s,2,1^{r-2})|\varnothing),(s+1,1^{r-1})|\varnothing)}\\
   =&k_{(s,2,1^{r-2}),(s+1,1^{r-1})}\\
   =&\left \{\begin{array}{ll}k_{(s,2),(s+1,1)}k_{(1^{r-2}),(1^{r-2})}&\text{if $s\neq 0$,}\\ k_{(2,1^{r-2}),(1^r)}&\text{if $s=0$,}\end{array}\right .\\
   =&\left \{\begin{array}{ll}1&\text{if $s\neq 0$,}\\ 0&\text{if $s=0$.}\end{array}\right .
\end{align*}
\end{proof}


\begin{thebibliography}{99}



\bibitem{BowGia} C. Bowman and E. Giannelli, The integral isomorphism behind row removal phenomena for Schur algebras, Math. Proc. Cambridge Philos. Soc. 167 (2019), no. 2, 209--228.

\bibitem{Brauer} R. Brauer, On a conjecture by Nakayama, Trans. Roy. Soc. Canada Sect. III 41 (1947), 11--19.


\bibitem{DL} S. Danz and K. J. Lim, Signed Young modules and simple Specht modules, Adv. Math. 307 (2017), 369--416.



\bibitem{Donkin} S.~Donkin, Symmetric and exterior powers, linear source modules and representations of Schur superalgebras,  Proc. London Math. Soc. 83 (2001), 647--680.

\bibitem{Erdmann} K.~Erdmann, Young modules for symmetric groups. Special issue on group theory, J.~Aust.~Math.~Soc. 71 (2001), 201--210.


\bibitem{GLOW} E. Giannelli, K. J. Lim, W. O'Donovan and M. Wildon, On signed $p$-Kostka numbers and the indecomposable signed Young permutation modules, J. Group Theory 20 (2017), no. 4, 637--679.

\bibitem{GL} E. Giannelli and K. J. Lim, On signed $p$-Kostka matrices, in preparation.




\bibitem{AHenke} A. Henke, On $p$-Kostka numbers and Young modules, European J. Combin. 26 (2005), 923--942.

\bibitem{James} G. D. James, Trivial source modules for symmetric groups, Arch.~Math. (Basel), 41  (1983), 294--300.

\bibitem{GJ} G. D. James, The Representation Theory of the Symmetric Groups, {\em Lecture Notes in Mathematics}, \textbf{682}, Springer, Berlin, 1978.




\bibitem{Lim} K. J. Lim, Straightening rule for an $m'$-truncated polynomial ring,  J. Algebra 522 (2019), 11--30.


\bibitem{O'Donovan} W. O'Donovan, Representations of Symmetric Groups, PhD Thesis, Royal Holloway, University of London, 2018.

\bibitem{Peel} M. H. Peel, Hook representations of symmetric groups, Glasgow Math. J. 12 (1971), 136--149.

\bibitem{Robinson} G. de B. Robinson, On a conjecture by Nakayama, Trans. Roy. Soc. Canada Sect. III 41 (1947), 20--25.

\end{thebibliography}
\end{document}